\documentclass[12pt]{article}
\usepackage{cmap}
\usepackage[utf8]{inputenc}
\usepackage[T1]{fontenc}
\usepackage[english]{babel}
\usepackage{amssymb,amsmath,amsfonts,amsthm,amscd,latexsym,indentfirst,verbatim,xcolor}
\usepackage{geometry}
\def\Fac{\mathcal F}
\def\Rel{\mathcal P}
\def\End{\textrm{End}}
\def\Span{\textrm{Span}}
\def\charr{\textrm{char}\,}
\def\id{\operatorname{id}}

\theoremstyle{plain}
\newtheorem{theorem}{Theorem}[section]
\newtheorem{lemma}[theorem]{Lemma}
\newtheorem{question}[theorem]{Question}
\newtheorem{proposition}[theorem]{Proposition}

\theoremstyle{definition}
\newtheorem{definition}[theorem]{Definition}

\begin{document}
\sloppy
\hfill{16N20, 16N40 (MSC2020)}

\begin{center}
{\Large
New results on nil rings}

\smallskip

Vsevolod Gubarev
\end{center}

\begin{abstract}
For every field~$\Bbbk$, we construct a non-nilpotent nil algebra~$A$
whose Lie algebra~$A^{(-)}$ is generated by ten elements.
For $\Bbbk = \mathbb Z_p$, its adjoint group can also be generated by ten elements.
Moreover, for every field~$\Bbbk$ algebraic over its prime subfield,
we construct a finitely presented algebra~$B$ over~$\Bbbk$ whose
Jacobson radical is not nil.
These results were found with the assistance of Aristotle (Harmonic).

{\it Keywords}:
nil ring, Jacobson radical, finitely presented algebra.
\end{abstract}

\section{Introduction}

We consider three questions concerning associative nil rings
and the Jacobson radical.
The first one concerns the nilpotency of a nil ring with a finitely generated
adjoint group. The second one asks whether a nil algebra is nilpotent
provided its Lie algebra is finitely generated.
The third question asks whether the Jacobson radical of a finitely presented
algebra is necessarily a nil ideal.
We construct counterexamples to all three statements under the indicated
conditions on the field.

All rings and algebras are associative and are not necessarily unital.
For a nil ring~$R$, the operation $a\circ b = a+b+ab$ defines the adjoint
group~$R^\circ$.
We denote by~$A^{(-)}$ the Lie algebra on the space~$A$
with the bracket $[a,b] = ab-ba$.
For a prime~$p$, put $\mathbb Z_p = \mathbb Z/p\mathbb Z$.

The structures $R^\circ$ and $R^{(-)}$ of a radical ring~$R$
are closely related. In 1955, S.~A.~Jennings proved~\cite{Jennings}
that $R^\circ$ is nilpotent if and only if $R^{(-)}$ is nilpotent.

\begin{question}[B.~Amberg, L.~Kazarin, 2007]\label{q:G}
Let~$R$ be a nil ring such that the group~$R^\circ$ is finitely generated.
Is~$R$ necessarily nilpotent?
\end{question}

\begin{question}[B.~Greenfeld, A.~Leroy, A.~Smoktunowicz,
M.~Ziembowski, 2015]\label{q:L}
Let~$A$ be a nil algebra over a field~$\Bbbk$ such that the Lie algebra~$A^{(-)}$
is finitely generated. Is $A$~necessarily nilpotent?
\end{question}

The first question was posed for nil algebras by B.~Amberg and L.~Kazarin
\cite[Question~9]{AK} and stated for nil rings in~\cite[Question~3]{Sm}.
The nilpotency of radical rings with a~two-generated adjoint group
was proved in~\cite{Sys}.
B.~Greenfeld obtained a similar result for boundedly generated
adjoint groups~\cite{Gr}.
Recall that a group~$G$ is boundedly generated if
$G = H_1\ldots H_k$ for some cyclic subgroups $H_1,\ldots,H_k\subseteq G$.

The second question was posed without restrictions on the characteristic
in~\cite[Question~37]{GLSZ} (2015).
It was repeated for fields of characteristic different from~$2$
in~\cite[Question~4]{Sm} (2018).
A.~Alahmadi and H.~Alsulami proved for fields of characteristic different from~2 that all positive terms of the derived series of~$A^{(-)}$
are finitely generated if the associative algebra~$A$ is generated
by finitely many nilpotent elements~\cite{AA}.
However, this does not imply that~$A^{(-)}$ itself is finitely generated.
J.~P.~Bell and B.~W.~Madill constructed an infinite-dimensional
graded-nil algebra of quadratic growth which is finitely generated
as a Lie algebra but is not a nil algebra~\cite[Theorem~6.1]{BM}.

\begin{theorem}\label{thm:nil}
For every field~$\Bbbk$, there exists a non-nilpotent nil algebra~$A$
over~$\Bbbk$ such that the Lie algebra~$A^{(-)}$ is generated by ten elements.
If $\Bbbk = \mathbb Z_p$, the algebra~$A$ can be chosen so that
its adjoint group~$A^\circ$ is also generated by ten elements.
\end{theorem}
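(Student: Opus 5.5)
We will build $A$ as a matrix algebra over a non-nilpotent nil algebra, and use the commutator structure of matrix rings to make the Lie algebra finitely generated. Start from a Golod--Shafarevich algebra: for every field $\Bbbk$ there is a finitely generated (say two-generated, $x,y$) non-nilpotent nil algebra $N$ over $\Bbbk$. Put $A = M_n(N)$ for a suitable small $n$ (we will take $n=2$ or $n=3$). Then $A$ is nil, since $M_n(N)$ is nil whenever $N$ is; this follows from the Köthe-type fact that the radical of matrices is the matrix radical, or directly because $M_n(N)$ is locally nilpotent when $N$ is locally nilpotent. So we should in fact choose $N$ locally nilpotent if possible, or simply argue nilness of $M_n(N)$ via a Golod--Shafarevich construction applied directly to the finitely generated algebra $M_n(N)$. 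The cleanest route is to construct $A$ itself as a Golod--Shafarevich algebra containing matrix units $e_{ij}N$. Finally, $A$ is non-nilpotent because $N\cong e_{11}Ae_{11}$ is.

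The key step is to show that $A^{(-)}$ is generated by about ten elements. Take as Lie generators the elements $e_{ij}x$ and $e_{ij}y$ for $i\ne j$, together with a few extra elements such as $e_{11}x$ and $e_{11}y$. The identity $[e_{ij}a, e_{jk}b] = e_{ik}ab$ for distinct $i,j,k$ then produces all off-diagonal entries $e_{ik}w$ for words $w$ in $x,y$; this needs $n\ge 3$. Diagonal entries come from $[e_{ij}a,e_{ji}b]=e_{ii}ab-e_{jj}ba$, combined with trace-type corrections. The obstruction is the "trace part" $\sum_i e_{ii}(\cdot)$ modulo $[N,N]$. So the generators must include representatives that span $e_{11}N$ modulo the Lie subalgebra already obtained; one checks that $e_{11}N$ is covered once $e_{11}x$ and $e_{11}y$ are included, using $[e_{11}a, e_{1k}b]=e_{1k}ab$ and similar relations to climb degree. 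Counting, for $n=3$ there are $6$ off-diagonal positions; using only a well-chosen subset (e.g. $e_{12}x, e_{12}y, e_{23}, e_{31}$-type elements with coefficients in $N$, since $N$ is non-unital) gives a total around ten.

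For $\Bbbk=\mathbb Z_p$ the argument is repeated for the adjoint group. By the Jennings correspondence, group commutators $(1+e_{ij}a, 1+e_{jk}b)$ equal $1+e_{ik}ab+\dots$ with higher-order corrections. Since $N$ is graded and nil, and over $\mathbb Z_p$ the additive group is generated by the images, a filtration argument shows that the subgroup generated by $1+g$ for the ten generators is all of $A^\circ$. Here one uses induction on degree, with each layer of the associated graded being an $\mathbb F_p$-space spanned by commutator images, and completeness coming from nilness. The main obstacle is this final step: the degree induction terminates only in a locally nilpotent or graded setting, and it gives just a dense subgroup in general. I would first try to choose $N$ graded with all finite-degree truncations handled, so that every element of $A$ lies in a finitely generated, hence nilpotent, subring.
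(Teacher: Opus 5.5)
Your proposal has a genuine gap at its key step, the Lie generation. Let $[N,N]$ be the span of all $ab-ba$ with $a,b\in N$, and define $\tau\colon M_n(N)\to N/[N,N]$ by $\tau(X)=\sum_i X_{ii}+[N,N]$. Since $\tau([X,Y])=\sum_{i,j}[X_{ij},Y_{ji}]+[N,N]=0$, the map $\tau$ is a Lie homomorphism onto an abelian Lie algebra. Hence the Lie subalgebra generated by any set $S$ has $\tau$-image $\tau(\Span S)$.

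For your generators ($e_{ij}x$, $e_{ij}y$ off the diagonal, plus $e_{11}x$, $e_{11}y$), this image is spanned by the classes of $x$ and $y$. Suppose $N=F/I$ is a Golod--Shafarevich algebra with relations of degree at least~$3$; this holds under Lemma~\ref{lem:GSineq}, where one quadratic relation would already contribute $9/16>1/2$. Then $N$ is graded and the degree-$2$ part of $[N,N]$ is $\Bbbk(xy-yx)$. So $\tau(e_{11}x^2)=x^2+[N,N]$ lies outside the image, $e_{11}x^2$ is never generated, and your claim that $e_{11}N$ is covered is false.

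More generally, finite generation of $M_n(N)^{(-)}$ forces $\dim N/[N,N]<\infty$. You never arrange this, and matrix units cannot supply it: $e_{ii}=e_{ij}e_{ji}$ but $e_{ji}e_{ij}=e_{jj}\neq0$, and this is exactly where the trace survives.

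The missing idea is the paper's replacement of matrix units by the monomial algebra $M$ spanned by the factors of an infinite word (Lemma~\ref{lem:word}). There every factor $w$ with $|w|\geq2$ splits as $w=uv$ with $vu=0$ in $M$. Hence $w\otimes y=[u\otimes y',v\otimes y'']$ exactly, with no diagonal remainder. This makes $R^{(-)}$, for $R\subseteq M\otimes_\Bbbk F$, generated by the ten elements $c\otimes t_k$. The linear complexity $|\Fac_n|\leq Cn$ keeps the Golod--Shafarevich sum small after tensoring with $F$.

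The other two steps are also unresolved. That $M_n(N)$ is nil for every nil $N$ is not a known fact: by Krempa's theorem it is equivalent to the K\"othe conjecture. The matrix formulas for the Jacobson and Levitzki radicals do not yield nilness.

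Your fallback of taking $N$ locally nilpotent is self-defeating, and so is your proposed fix for the adjoint group (every element lying in a finitely generated, hence nilpotent, subring). If $A^{(-)}$ is generated by ten elements, then $A$ is generated by them as an associative algebra, so a locally nilpotent $A$ would be nilpotent.

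What works is imposing the nil relations on the whole algebra, as the paper does for $R$. For $V_d=\bigoplus_{j\leq d}R_j$, the span of all $z^{N_d}$ with $z\in V_d$ has dimension at most $(N_d+1)^{q_d}$, and the $F$-coefficients of a basis of this span are taken as relations.

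For $\Bbbk=\mathbb Z_p$, the degree induction only shows that $H$ is dense in $1+R$. The paper turns density into equality by putting the errors $u_{d,z}-1-z\in R_{\geq N_d}$, $z\in V_d$, into the same relation spaces $E_d$. There are at most $p^{q_d}\leq(N_d+1)^{q_d}$ of them, since $N_d\geq p$. Then $u_{d,z}$ maps exactly to $1+\varphi_I(z)$, and the Golod--Shafarevich sum stays at most $1/2$.
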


Thus, the answers to Questions~\ref{q:G} and~\ref{q:L} are negative.
For the second question, no restriction on the characteristic is required.
Over an infinite field, the adjoint group of a~nonzero nil algebra
is not finitely generated~\cite{Gr}.

A.~Smoktunowicz stated the following question in her 2001 survey
\cite[Question~28]{Sm01}, attributing it to S.~A.~Amitsur
and referring to the 1999 survey of E.~R.~Puczylowski~\cite{Pu99}.

\begin{question}[S.~A.~Amitsur, survey of 2001]\label{q:amitsur}
Let~$A$ be a finitely presented associative algebra over a field~$\Bbbk$.
Is its Jacobson radical~$J(A)$ necessarily a nil ideal?
\end{question}

Over an uncountable field, the answer is positive.
Indeed, if $\dim A < |\Bbbk|$, then~$J(A)$ is a nil ideal~\cite{Am56}.

\begin{theorem}\label{thm:amitsur}
Let~$\Bbbk$ be a field algebraic over its prime subfield.
Then there exists a~finitely presented associative algebra~$B$
over~$\Bbbk$ whose Jacobson radical~$J(B)$ is not a nil ideal.
\end{theorem}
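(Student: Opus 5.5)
Over a field algebraic over its prime subfield, every finitely generated commutative algebra behaves rigidly: elements satisfy polynomial identities over $\mathbb Z_p$ or over $\mathbb Q$ that are algebraic. The idea is to find a finitely presented algebra $B$ containing an element $x\in J(B)$ with $x$ not nilpotent. I would aim to realize $J(B)$ as containing a copy of something like $t\Bbbk[t]_{(t)}$-type behaviour, but a localization is not finitely presented. So instead I would embed a known non-nil Jacobson radical element through finite presentation: start from an algebra where $1+xy$ is invertible for all $y$ forced by relations. Concretely, I plan to take generators $x, u$ and impose relations making $x$ quasi-regular in a controlled way, e.g. $B=\Bbbk\langle x,y,z\mid \text{relations}\rangle$ with $x\circ y = 0 = y\circ x$ (so $x$ is quasi-invertible) and further finitely many relations ensuring that the ideal generated by $x$ consists of quasi-regular elements, while the subalgebra $\Bbbk[x]$ stays a polynomial ring.

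The key steps in order. First, choose a finitely generated radical (Jacobson radical) ring $R$ that is not nil; the natural candidate is the ideal $t\Bbbk[t]$ inside a suitable algebra where $1+t\Bbbk[t]$ is invertible, which is impossible finitely presented commutatively, so I would use a noncommutative trick: a finitely presented algebra $B$ generated by $x$ and auxiliary elements $a,b$ with relations $ax=xa'$-type shift relations making $BxB$ equal to a corner on which every element is a finite sum controlled by a single quasi-inverse. Second, verify $x\in J(B)$ by showing $1+bxc$ is invertible for all $b,c$, reducing via the relations to invertibility of $1+x f(x)$ type elements, which holds because of an explicit quasi-inverse generator. Third, prove $x$ is not nilpotent by constructing a representation of $B$ (e.g. on $\Bbbk[[t]]$ or on a space of sequences) where $x$ acts by a non-nilpotent operator; here the algebraicity of $\Bbbk$ would be used to guarantee a finite presentation of the needed quasi-inverses: since $\Bbbk$ is a union of finite fields or number fields, every relevant scalar lies in a finitely generated subring, and the requirement that $1-\lambda x$ be invertible for all $\lambda\in\Bbbk$ can be encoded by finitely many relations (over $\mathbb Z_p$-algebraic fields, $\lambda$ ranges in finite fields whose nonzero elements are roots of unity, so $1-\lambda x$ invertibility follows from invertibility of $1-x^{N}$ for suitable $N$, handled by one generator plus a Frobenius-type shift relation $x\mapsto x^p$).

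The main obstacle is the second step: forcing all of $BxB$ (not merely $\Bbbk[x]x$) to be quasi-regular with finitely many relations, while keeping a faithful enough model in which $x$ is not nilpotent. I would first try making $x$ central-like via relations $[x,g]=0$ for generators $g$, so that quasi-regularity of $BxB$ reduces to invertibility of $1+x f$ with $f$ in a commutative-by-finite setting, and use the algebraic-over-prime-field hypothesis to show every such $1+xf$ divides some $1-x^N h$ handled by the finitely many inverse generators.
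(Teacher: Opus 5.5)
There is a genuine gap. No finitely presented algebra is actually specified, and the step you call the main obstacle is the entire content of the theorem. Note that any solution must contain a copy of $\Bbbk[t]_{(t)}$. A non-nilpotent $x\in J(B)$ is transcendental: if $\mu(x)=0$ with $\mu=t^k\nu$ and $\nu(0)\neq0$, then $\nu(x)$ is a unit, so $x^k=0$. Moreover, every $s(x)$ with $s(0)\neq0$ must be a unit. So finitely many relations must produce inverses of infinitely many irreducible polynomials in $x$, and your proposed mechanisms do not do this.

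\begin{itemize}
\item In characteristic $p$, the exponent $N$ with $1-\lambda x\mid 1-x^N$ depends on the order of $\lambda$. This order is unbounded even for finite $\Bbbk$, since irreducibles of higher degree must be inverted too. A relation implementing $x\mapsto x^p$ does not help: passing from $s(x)$ to $s(x^p)$ does not change the multiplicative orders of the roots, so no new $N$ is reached.
\item In characteristic $0$ you give no mechanism at all. The remark about finitely generated subrings is not where algebraicity is needed.
\item Checking that $1+bxc$ is invertible for all $b,c$ requires a normal form for $B$, which the sketch lacks. Making $x$ central merely turns this into invertibility of $1+xb$ for all $b\in B$.
\end{itemize}

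The missing idea, which is the paper's, is to index the polynomials by words and obtain all their inverses from one inverse. Your remark that the localization is not finitely presented is right; the point is to realize $\Bbbk[t]_{(t)}$ as a corner $EBE$ rather than as $B$. The construction:
\begin{itemize}
\item generators $S_i,T_i$ with $T_iS_j=\delta_{ij}$, and put $E=1-\sum_iS_iT_i$;
\item an element $\theta$ with $\theta=E\theta E+\sum_iS_i\theta T_i$, so that $\theta$ commutes with $E,S_i,T_i$;
\item an element $D$ with the self-similar relation $D=E+\sum_iS_i(a_i(\theta)D+b_i(\theta))T_i$;
\item one further generator $R$ with $RD=DR=1$.
\end{itemize}
Define $p_\varnothing=1$ and $p_{iw}=a_ip_w+b_i$. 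Then $DS_wE=S_wp_w(\theta)E$ and $ET_wD=p_w(\theta)ET_w$. Hence $ET_wRS_wE$ inverts $p_w(\theta)E$ in $EBE$ for all words $w$ simultaneously.

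Algebraicity enters only in choosing the affine maps so that every irreducible $q$ with $q(0)\neq0$ divides some $p_w$. For $((t,1))$ in characteristic $p$, the polynomial $p_{1^{ph-1}}=1+t+\dots+t^{ph-1}$ vanishes at every $\alpha$ with $\alpha^h=1$. For $((1,1),(1,-t),(t,1))$ in characteristic $0$, every $\gamma\in\mathbb Z[t]$ with $\gamma(0)>0$ equals some $p_w$.

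Instead of testing $1+bxc$, one then computes the corner exactly. Let $\varphi\colon\Bbbk[t]_{(t)}\to EBE$ be given by $t\mapsto\theta E$. The space $\sum_wS_w\varphi(\Bbbk[t]_{(t)})$ is a left ideal containing $E$, hence equals $BE$, so $EBE=\varphi(\Bbbk[t]_{(t)})$. The representation on the free $\Bbbk[t]_{(t)}$-module with basis $\{v_w\}$ shows that $\varphi$ is injective. Hence $\theta E\in J(EBE)=EJ(B)E\subseteq J(B)$, while $(\theta E)^n\neq0$ for all $n$.
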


Theorem~\ref{thm:amitsur} gives a negative answer to
Question~\ref{q:amitsur} over the indicated fields.

The proof of Theorem~\ref{thm:nil} uses the Golod--Shafarevich
inequality~\cite{GS}, an algebra with a basis consisting of factors
of an infinite word~\cite{BM,BO}, and approximation by elements
of the adjoint group~\cite{Gr}.
For Theorem~\ref{thm:amitsur}, we construct a finitely presented
algebra~$A$ and an idempotent~$E\in A$ such that
$EAE\cong\Bbbk[t]_{(t)}$. Under this isomorphism, the image of~$t$
belongs to $J(EAE)\subseteq J(A)$ but is not nilpotent.

The presented results were found with the assistance of Aristotle (Harmonic)~\cite{Aristotle}.

\section{Preliminaries}\label{sec:prelim}

For an algebra~$R$ over~$\Bbbk$ without identity, denote by~$R^\#$
the algebra $\Bbbk1\oplus R$ obtained by adjoining an identity.
If~$R$ is a nil algebra, the correspondence $r\to 1+r$
induces a~group isomorphism $R^\circ\cong 1+R\subset R^\#$.

Let $F = \bigoplus_{n\geq1} F_n$ be the free algebra over~$\Bbbk$
on generators $t_1$,~$t_2$, where $F_n$ is the linear span
of the monomials of degree~$n$.
An ideal~$I$ of~$F$ generated by homogeneous elements is homogeneous,
that is, $I = \bigoplus_{n\geq1} I_n$, where $I_n = I\cap F_n$,
and $F/I = \bigoplus_{n\geq1} F_n/I_n$.

\begin{lemma}\label{lem:GSineq}
Let $\Rel\subseteq \bigcup_{j\geq2} F_j$,
$\sum_{f\in\Rel}(3/4)^{\deg f}\leq 1/2$,
and let~$I$ be the ideal of~$F$ generated by~$\Rel$.
Then $I_n\neq F_n$ for all $n\geq1$.
\end{lemma}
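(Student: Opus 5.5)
The plan is to follow the standard Golod--Shafarevich argument (Vinberg's version). I would work with Hilbert series and derive a coefficientwise inequality for $\dim (F/I)_n$. Put $a_n = \dim F_n/I_n$ for $n\ge1$ and $a_0=1$, and $r_j = |\{f\in\Rel : \deg f = j\}|$. We may assume $\Rel$ consists of homogeneous elements; each homogeneous component of a relation lies in some $F_j$, and this is how the hypothesis $\Rel\subseteq\bigcup_{j\ge2}F_j$ is to be read. The hypothesis then gives $\sum_j r_j (3/4)^j \le 1/2$; in particular each $r_j$ is finite and $r_0=r_1=0$.

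The key step is the inequality
\[
a_n \ge 2a_{n-1} - \sum_{j=2}^{n} r_j a_{n-j}, \qquad n\ge1 .
\]
To prove it, write $A=F/I$ and $A^\#=\Bbbk 1\oplus A$, and choose homogeneous subspaces $V_m\subseteq F_m$ lifting a basis of $A_m$, with $V_0=\Bbbk 1$. Every monomial of degree $n$ has the form $t_i u$ with $u$ of degree $n-1$. Hence
\[
F_n = t_1F_{n-1}+t_2F_{n-1} \subseteq t_1V_{n-1}+t_2V_{n-1}+I_n .
\]
Next, $I_n$ is spanned by elements $u f v$ with $f\in\Rel$ of degree $j$. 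Reducing $v$ modulo $I$ and using the minimality in the choice of spanning sets, I would show
\[
I_n \subseteq \sum_{f,\ j=\deg f} F f V_{n-j} \;+\; \text{(terms handled recursively)}.
\]
The cleaner route, as in Vinberg, is the exact sequence
\[
\bigoplus_{f\in\Rel} A^\# f A^{\#}\dots
\]
Instead I would use the standard one:
\[
F_n = t_1F_{n-1}+t_2F_{n-1},\qquad I_n = t_1I_{n-1}+t_2I_{n-1}+\sum_{f} f\,F_{n-\deg f}^{\#} .
\]
This holds because any $ufv$ with $u$ nonempty starts with some $t_i$. Replacing $F_{n-j}^{\#}$ by $V_{n-j}+I_{n-j}$, and absorbing $fI_{n-j}\subseteq t_1I_{n-1}+t_2I_{n-1}$ when $j\ge1$, gives
\[
F_n/(t_1I_{n-1}+t_2I_{n-1}) \text{ of dimension } 2a_{n-1}\cdot\text{(lifted)},
\]
and hence $a_n \ge 2a_{n-1}-\sum_j r_j a_{n-j}$. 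The bookkeeping here is the step I expect to require the most care. The point to get right is that the map $A^\#t_1\oplus A^\#t_2\to A$ (right multiplication version) has kernel controlled by the relations, giving exactly this dimension count.

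With the inequality in hand, I would pass to power series. Set $H(z)=\sum_{n\ge0}a_nz^n$ and $R(z)=\sum_j r_jz^j$. Then
\[
H(z)\,(1-2z+R(z)) \ge 1
\]
coefficientwise. Suppose for contradiction that $a_N=0$ for some $N\ge1$. Then $A_m=0$ for all $m\ge N$, since $A_m$ is spanned by $A_1A_{m-1}$. So $H$ is a polynomial with nonnegative coefficients and $H(3/4)>0$.

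Now evaluate at $z=3/4$, which is allowed because all the series converge there. The product $H\cdot(1-2z+R)$ has nonnegative coefficients after subtracting $1$. One subtlety is that $1-2z+R$ has a negative coefficient, so coefficientwise $\ge$ does not directly pass to evaluation. I would handle this by multiplying the inequality by the positive series $(1-z)^{-1}$ only if needed. Actually the direct statement suffices: if $P(z)=H(z)(1-2z+R(z))-1$ has all coefficients nonnegative, then $P(3/4)\ge0$, since $P$ converges at $3/4$ as $H$ is a polynomial and $R(3/4)\le1/2$. This gives
\[
H(3/4)\bigl(1-\tfrac32+R(3/4)\bigr)\ge 1 .
\]
But $1-\tfrac32+R(3/4)\le -\tfrac12+\tfrac12=0$ while $H(3/4)>0$, which is a contradiction. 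Therefore $I_n\ne F_n$ for all $n\ge1$.
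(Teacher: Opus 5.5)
Your proof is correct, but it finishes differently from the paper. The paper cites [GS] for the recurrence $c_n\geq 2c_{n-1}-\sum_{j=2}^n r_jc_{n-j}$. It then shows by strong induction that $c_n\geq\frac43c_{n-1}$: the inductive hypothesis gives $c_{n-j}\leq(3/4)^{j-1}c_{n-1}$, hence $c_n\geq\bigl(2-\frac43\sum_j r_j(3/4)^j\bigr)c_{n-1}\geq\frac43c_{n-1}$, so $c_n\geq(4/3)^n$.

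You instead do two things. First, you derive the recurrence yourself from $I_n=t_1I_{n-1}+t_2I_{n-1}+\sum_{f\in\Rel}fV_{n-\deg f}$. This is valid; the absorption $fI_{n-j}\subseteq t_1I_{n-1}+t_2I_{n-1}$ correctly uses $\deg f\geq1$. Second, you finish with the classical Hilbert-series argument. Assuming $a_N=0$ forces $a_m=0$ for all $m\geq N$, so $H$ is a polynomial, and evaluating the coefficientwise nonnegative series $H(z)(1-2z+R(z))-1$ at $z=3/4$ is legitimate; this gives the contradiction.

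What each route buys: the paper's induction is shorter, avoids any convergence or evaluation question, and yields the quantitative bound $\dim F_n/I_n\geq(4/3)^n$, not just nonvanishing. Your route is the standard Vinberg-style argument and makes the lemma independent of the citation for the recurrence.

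Clean up the derivation of the recurrence in a write-up. Drop the abandoned routes and state the count directly: $F_n/(t_1I_{n-1}+t_2I_{n-1})$ has dimension exactly $2a_{n-1}$. Modulo this subspace, $I_n$ is spanned by the images of the spaces $fV_{n-\deg f}$, of total dimension at most $\sum_j r_ja_{n-j}$. Also, your remark about a ``right multiplication version'' of the map does not match the left decomposition $F_n=t_1F_{n-1}\oplus t_2F_{n-1}$ you actually use.
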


\begin{proof}
Put $r_j = |\Rel\cap F_j|$, $c_0 = 1$,
and $c_n = \dim F_n/I_n$ for $n\geq1$.
By~\cite{GS} (where the algebra~$F$ has an identity, which does not
affect the components~$I_n$, $n\geq1$), we have
$c_n\geq 2c_{n-1} - \sum_{j=2}^n r_j c_{n-j}$ for all $n\geq1$.
We prove by induction on~$n$ that $c_n\geq 4c_{n-1}/3$.
Indeed, since $c_{n-j}\leq (3/4)^{j-1}c_{n-1}$ for $1\leq j\leq n$, we obtain
$$
c_n \geq \left(2 - (4/3)\sum_{j = 2}^n r_j(3/4)^j\right)c_{n-1}
 \geq (2 - (4/3)(1/2))c_{n-1}
 = 4c_{n-1}/3.
$$
Hence $c_n\geq (4/3)^n > 0$.
\end{proof}

For an infinite word $x=x_0x_1x_2\ldots$, denote by~$\Fac_n$
the set of its factors of length~$n$, that is, the words
$x_ix_{i+1}\ldots x_{i+n-1}$ for $i\geq0$.
The cyclic shifts of a word $w=w_0\ldots w_{n-1}$ are the words
$w_i\ldots w_{n-1}w_0\ldots w_{i-1}$, $0\leq i<n$.

\begin{lemma}\label{lem:word}
There exist an infinite word~$x$ over the alphabet
$\{a,b,c,d,e\}$ containing all five letters and a constant~$C>0$
such that $|\Fac_n|\leq Cn$ for all $n\geq1$.
For every factor~$w$ of~$x$ of length at least~2,
some cyclic shift of~$w$ is not a factor of~$x$.
\end{lemma}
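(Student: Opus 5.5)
The plan is to take $x = de\,y$, where $y$ is a fixed point over $\{a,b,c\}$ of a primitive uniform morphism $\sigma$, and to get the two claims of Lemma~\ref{lem:word} from standard facts about such fixed points plus a cyclic desubstitution argument. The letters $d$ and $e$ each occur exactly once, at positions $0$ and $1$. For $y$ I would start with Thue's ternary square-free word, the fixed point of $a\mapsto abc$, $b\mapsto ac$, $c\mapsto b$. If that fails the finite checks below, I would search for a uniform morphism of small length.

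A preliminary observation explains the choice. If $x$ contains $uu$ with $|u|\geq1$, then every cyclic shift of $u$ occurs in $uu$. For $|u|\geq 2$ this violates the lemma directly. For $|u|=1$, the word $uu$ itself has only itself as a cyclic shift. So $y$ must be square-free, and $y$ must not contain any "circular factor" $w$ of length $\ge 2$ all of whose rotations are factors.

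\textbf{Linear complexity.} The word $y$ is a fixed point of a primitive morphism, so $|\Fac_n(y)|\leq C'n$ by the classical results of Pansiot. Prepending $de$ adds at most two factors of each length $n$, namely those starting at positions $0$ and $1$. Hence $|\Fac_n(x)|\leq C'n+2\leq Cn$, and all five letters occur in $x$.

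\textbf{Factors containing $d$ or $e$.} Let $w$ be a factor of $x$ of length $\geq2$ containing $d$ or $e$. Then $w$ is a prefix of $x$ or of $e\,y$, so it starts with $d$ or $e$. Its cyclic shift by one moves that initial letter to the last position. Since $|w|\geq2$, that letter then sits at position at least $1$ (for $d$) or at least $2$ (for $e$) inside any occurrence, which is impossible because $d$ and $e$ occur only once. So only factors of $y$ remain to be handled.

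\textbf{Factors of $y$ (main obstacle).} Suppose $w$ is a factor of $y$ with $|w|\geq2$ all of whose cyclic shifts are factors of $y$. Equivalently, every factor of length $|w|$ of the bi-infinite periodic word $\cdots www\cdots$ is a factor of $y$. I would use recognizability (synchronization) of $\sigma$ on $y$, which holds for an aperiodic primitive fixed point by Mossé's theorem. For $|w|$ above an explicit constant $L$, it shows that the periodic word $w^\infty$ is cut consistently into $\sigma$-blocks, so the cut is periodic. Desubstitution then gives a word $w'$ with $|w'|<|w|$ whose rotations are all factors of $y$; roughly, $w'$ is $w$ rotated and desubstituted.

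Iterating this reduces to $|w|\le L$. That finite range I would check by exhaustive enumeration of factors and their rotations, the kind of computation the paper delegates to Aristotle. One extra point needs care: the reduction might reach $|w'|=1$. That case must be excluded by the block structure: a single-letter circular word comes from a square $\sigma(u)\sigma(u)$, which square-freeness forbids. If the enumeration finds a short bad circular factor for Thue's word, I would switch to another uniform square-free morphism and rerun both the synchronization constant and the check.
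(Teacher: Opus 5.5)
\textbf{The approach has a fatal gap.} Your treatment of the once-occurring letters $d,e$ and your complexity bound are fine. However, they leave the whole burden on $y$. A rotation of a factor of $y$ is a word over $\{a,b,c\}$, and such a word is a factor of $de\,y$ only if it is a factor of $y$. So $y$ itself must satisfy the cyclic-shift condition, and no infinite word over three letters does.

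Thue's word already fails at length~$2$. Since $y=abcacbab\ldots$ contains both $ab$ and $ba$, every rotation of $w=ab$ is a factor. The same holds for $bc,cb$ and for $ac,ca$.

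In general, take a suffix $y'$ of $y$ in which every occurring letter occurs infinitely often. The condition rules out two things:
\begin{itemize}
\item factors $\ell\ell$, since the only rotation of $\ell\ell$ is itself;
\item having both $uv$ and $vu$ as factors.
\end{itemize}
Hence the length-$2$ factors of $y'$ form a loopless oriented graph on at most three letters, and every letter has an outgoing edge. One or two letters would force a loop or a $2$-cycle. With three letters there must be exactly one edge per pair and out-degree one everywhere, i.e.\ a directed triangle $a\to b\to c\to a$. Then each letter has a unique successor, so $y'$ is a suffix of $(abc)^\infty$. But then $abc$ is a factor all of whose rotations are factors.

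So neither Thue's morphism nor any fallback ternary morphism can pass your finite check. This holds whether the morphism is uniform or not (Thue's morphism is in fact not uniform). Your preliminary observation stopped at square-freeness, but the length-$2$ constraint is already decisive.

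\textbf{What is missing.} The real content is the existence of a suitable word. This requires the letters occurring infinitely often to range over more than three symbols, and it is a genuinely nontrivial combinatorial fact. The paper does not prove it from scratch:
\begin{itemize}
\item it takes $x=g(f^\omega(0))$ for the $5$-letter substitutions $f,g$ of \cite{BO};
\item it quotes \cite[Theorem~2.1]{BO} for the cyclic-shift property;
\item it obtains linear complexity from the primitivity of $f$ via \cite[Theorem~10.4.12]{AS}, together with $1\le|g(i)|\le16$, which gives $|\Fac_n|\le16C_0n$.
\end{itemize}
Your plan of recognizability and desubstitution followed by a finite enumeration is a plausible way to verify such a property for a concrete morphic word on a larger alphabet. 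In the proposal, however, it is only sketched: the claim that a long circular factor desubstitutes to a shorter one is asserted rather than proved. It is also aimed at a candidate that provably cannot work.
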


\begin{proof}
In~\cite[\S~2]{BO}, substitutions~$f$ and~$g$ are defined.
They assign to each $i\in\{0,1,2,3,4\}$ a word in
$\{0,1,2,3,4\}^*$ and a word in~$\{a,b,c,d,e\}^*$, respectively.
Both substitutions are extended to arbitrary words by concatenation
of the images of their letters.
By the formulas in~\cite[\S~2]{BO}, each~$f(i)$ begins with~01
and has length at least~4.
Thus, the words~$f^r(0)$, $r\geq0$, are successive initial segments
of an infinite word~$u=f^\omega(0)$.
Put $x=g(u)$. This word begins with~$abcdeacd$ and hence contains
all five letters.
By~\cite[Theorem~2.1]{BO}, every factor of~$x$ of length
at least~2 has a cyclic shift which is not a factor of~$x$.

By definition, $f(0) = 01203$ and $f(1) = 0124$.
Therefore, every word~$f^2(i)$ begins with~$f(0)f(1)$,
which contains all five digits.
Thus, the substitution~$f$ is primitive.
By~\cite[Theorem~10.4.12]{AS}, the number of distinct factors
of~$u$ of length~$n$ does not exceed~$C_0n$ for some constant~$C_0>0$.

By~\cite[\S~2]{BO}, all words~$g(i)$ are nonempty and have length at most~16.
Let~$w$ be a factor of length~$n$ of the word $x=g(u)$,
whose first letter occurs at position~$r$ in~$g(u_j)$, where
$u=u_0u_1\ldots$.
Then~$w$ is obtained from the word
$g(u_ju_{j+1}\ldots u_{j+n-1})$ by deleting its first $r-1$ letters
and taking the next~$n$ letters.
Thus,~$w$ is uniquely determined by the factor
$u_ju_{j+1}\ldots u_{j+n-1}$ of~$u$ and the integer~$r$.
Consequently, $|\Fac_n|\leq 16C_0n$ for all $n\geq1$.
\end{proof}

\section{Nil algebras and finite generation}\label{sec:nil}

Fix a field~$\Bbbk$, a word~$x$, and a constant~$C$ from Lemma~\ref{lem:word}.

\begin{definition}
Let~$M$ be the algebra over~$\Bbbk$ with basis
$\bigcup_{n \geq 1} \Fac_n$ in which the product of factors~$u$ and~$v$
is~$uv$ if $uv$ is a factor of~$x$, and zero otherwise.
Let~$M_n$ be the linear span of~$\Fac_n$.
Put $R = \bigoplus_{n \ge 1} R_n \subseteq M \otimes_\Bbbk F$,
where $R_n = M_n \otimes_\Bbbk F_n$,
$R_{\geq n} = \bigoplus_{j \geq n} R_j$, and
$h_{c,k} = c \otimes t_k$ for
$c \in \Fac_1 = \{a,b,c,d,e\}$, $k \in \{1,2\}$.
\end{definition}

The algebra~$M$ is associative, since both $(uv)w$ and $u(vw)$
are equal to~$uvw$ if $uvw$ is a factor of~$x$, and are zero otherwise.
For every nonempty factor~$w$, we have $w^2 = 0$ in~$M$.
If $|w| = 1$, then $ww$ contains all its cyclic shifts,
contrary to Lemma~\ref{lem:word}.
If $|w|\geq2$, then $ww$ contains all cyclic shifts of~$w$.
In particular, $h_{c,k}^2 = 0$.
Denote by~$H$ the subgroup of the group of invertible elements of~$R^\#$
generated by the elements $1 + \lambda h_{c,k}$, $\lambda\in\Bbbk$.
Their inverses are $1 - \lambda h_{c,k}$, so $H \subseteq 1 + R$.

The first assertion of the following lemma essentially repeats
Proposition~6.2 of~\cite{BM}.

\begin{lemma}\label{lem:lie}
The Lie algebra~$R^{(-)}$ is generated by the ten elements~$h_{c,k}$.
For any $z\in R$ and $n\geq1$, there exists $u\in H$
such that $u - 1 - z\in R_{\geq n}$.
\end{lemma}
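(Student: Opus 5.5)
The two assertions are proved separately. Both rest on the observation that each homogeneous component $R_n$ is spanned by the elements $w\otimes m$, where $w\in\Fac_n$ and $m$ is a monomial of degree~$n$ in $t_1,t_2$. Write $w=w_0\ldots w_{n-1}$ and $m=t_{k_0}\ldots t_{k_{n-1}}$. Every prefix of the factor~$w$ is again a factor, so the product $h_{w_0,k_0}\cdots h_{w_{n-1},k_{n-1}}$ equals $w\otimes m$.

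\emph{Lie generation.} Let $L$ be the Lie subalgebra of $R^{(-)}$ generated by the ten elements $h_{c,k}$. I will show by induction on~$n$ that $w\otimes m\in L$ for all $w\in\Fac_n$ and all monomials $m$ of degree~$n$; since these elements span $R_n$, this gives $R_n\subseteq L$ for all $n$, hence $L=R$.

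The case $n=1$ is immediate, since $w\otimes m=h_{w,k}$. For $n\geq2$, Lemma~\ref{lem:word} gives a cyclic shift of~$w$ that is not a factor of~$x$. This shift is $w_i\ldots w_{n-1}w_0\ldots w_{i-1}$ with $1\le i\le n-1$, since $i=0$ would give $w$ itself. Put $u=w_0\ldots w_{i-1}$ and $v=w_i\ldots w_{n-1}$; both are factors of~$x$, as subwords of~$w$. Split $m=m_1m_2$ accordingly, with $\deg m_1=i$. Then
$$(u\otimes m_1)(v\otimes m_2)=w\otimes m, \qquad (v\otimes m_2)(u\otimes m_1)=vu\otimes m_2m_1=0,$$
because $vu$ is not a factor. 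Hence $w\otimes m=[u\otimes m_1,v\otimes m_2]$, and both arguments lie in $L$ by the induction hypothesis.

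\emph{Approximation by $H$.} I first prove an auxiliary claim: for every $j\ge1$ and every $r\in R_j$, there is $g\in H$ with $g-1-r\in R_{\ge j+1}$.

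Two group-theoretic facts reduce the claim to basis elements and then to generators.

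Products handle sums. If $g_i-1-r_i\in R_{\geq j+1}$ for $r_i\in R_j$, then
$$g_1g_2-1-(r_1+r_2)\in R_{\ge j+1},$$
since $(g_1-1)(g_2-1)\in R_{\ge 2j}$. So it suffices to treat $r=\lambda\, w\otimes m$.

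Group commutators handle Lie brackets. Suppose $g_1=1+a+a'$ and $g_2=1+b+b'$ with $a\in R_p$, $b\in R_q$, $a'\in R_{\ge p+1}$, $b'\in R_{\ge q+1}$. Then
$$g_1g_2g_1^{-1}g_2^{-1}-1-[a,b]\in R_{\ge p+q+1}.$$
To see this, write the commutator as $1+(g_1g_2-g_2g_1)g_1^{-1}g_2^{-1}$, note that $g_1g_2-g_2g_1=[a,b]+R_{\ge p+q+1}$, and use $g_i^{-1}\in 1+R$.

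Now induct on~$j$, using the bracket decomposition $\lambda w\otimes m=[\lambda u\otimes m_1,v\otimes m_2]$ from the first part. The base case is $1+\lambda h_{c,k}\in H$.

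Finally, given $z\in R$, I build $u_j\in H$ with $u_j-1-z\in R_{\ge j}$ recursively, starting from $u_1=1$. Given $u_j$, let $r_j$ be the degree-$j$ component of $1+z-u_j$, choose $g$ for $r_j$ by the claim, and put $u_{j+1}=gu_j$. Then
$$u_{j+1}=u_j+(g-1)+(g-1)(u_j-1),$$
and the last term lies in $R_{\ge j+1}$, since $g-1\in R_{\geq j}$ and $u_j-1\in R$. Hence $u_{j+1}-1-z\in R_{\ge j+1}$, and $u=u_n$ is the required element.

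The only step needing genuine care is the commutator expansion modulo $R_{\ge p+q+1}$. The rest is bookkeeping with the grading.
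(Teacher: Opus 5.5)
Your proof is correct. The Lie-generation part is the same as the paper's, but the approximation part takes a different route in one step.

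The paper runs a single induction showing simultaneously that each basis element $b=w\otimes m$ lies in the Lie subalgebra and that $1+\lambda b\in H$ \emph{exactly}. It uses the identity
$$1+\lambda b=(1+\lambda b_1)(1+b_2)(1-\lambda b_1)(1-b_2),$$
which holds on the nose because $b_1^2=b_2^2=b_2b_1=0$ for $b_1=u\otimes m_1$, $b_2=v\otimes m_2$. After that, only your ``products handle sums'' step is needed: it approximates a homogeneous component by $\prod_b(1+\lambda_b b)$. The final recursion on degree is then identical to yours.

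You instead prove an approximate commutator formula modulo $R_{\ge p+q+1}$ and carry error terms through the induction. This is the standard filtration argument. It never uses the square-zero relations $b_1^2=b_2^2=b_2b_1=0$ in higher degrees, only that each basis element is a bracket of homogeneous elements of lower degree, so it is more robust and more general.

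The paper's route is shorter and has no error terms to track inside the induction. It also gives the sharper intermediate fact that $1+\lambda b\in H$ for every basis element $b$ and every $\lambda\in\Bbbk$.
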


\begin{proof}
The space~$R_n$ has a basis consisting of the elements
$b = w\otimes y$, where $w\in\Fac_n$ and~$y\in F_n$ is a monomial.
We prove simultaneously by induction on~$n$ that every such~$b$
belongs to the Lie algebra generated by~$h_{c,k}$, and that
$1 + \lambda b\in H$ for all $\lambda\in\Bbbk$.
For $n = 1$, this follows from the definitions.
For $n\geq2$, by Lemma~\ref{lem:word}, there exists a decomposition
$w = uv$ into nonempty words such that $vu = 0$ in~$M$.
Write $y = y'y''$ with $\deg y' = |u|$.
For $b_1 = u\otimes y'$ and $b_2 = v\otimes y''$, we have
$b_1^2 = b_2^2 = b_2b_1 = 0$.
Hence $b = [b_1,b_2]$ and
\begin{equation}\label{eq:exactcomm}
1 + \lambda b
 = (1 + \lambda b_1)(1 + b_2)(1 - \lambda b_1)(1 - b_2) \in H.
\end{equation}
The inclusion in~$H$ follows by the induction hypothesis.

We prove the second assertion by induction on~$n$, as in
\cite[Lemma~3.1]{Gr}. For $n = 1$, we may take $u = 1$.
Suppose that $e = z - (u - 1) \in R_{\geq n}$ and let
$e_n = \sum_b \lambda_b b$ be its homogeneous component of degree~$n$.
By~\eqref{eq:exactcomm} and the case $n = 1$ proved above,
the product $v = \prod_b(1 + \lambda_b b)$, taken in any order,
belongs to~$H$, and $v - 1 - e_n \in R_{\geq n+1}$.
Consequently,
\begin{equation*}
vu - 1 - z
 = (v - 1 - e_n) - (e - e_n) + (v - 1)(u - 1)
 \in R_{\geq n+1}. \qedhere
\end{equation*}
\end{proof}

For a homogeneous ideal $I\subseteq F$, denote by
$\varphi_I\colon R \to \bigoplus_{n\geq1} M_n \otimes_\Bbbk (F_n/I_n)$
the surjective algebra homomorphism defined by
$\varphi_I(w\otimes f) = w\otimes(f + I_n)$ for $w\in M_n$, $f\in F_n$.
Every element $r\in R_{\geq N}$ has a unique expression
$r = \sum_{j\geq N}\sum_{w\in\Fac_j}w\otimes f_w$,
where $f_w\in F_j$.
By Lemma~\ref{lem:word}, the sum of $(3/4)^{\deg f}$
over all nonzero coefficients~$f$ of~$r$ does not exceed
$\sigma(N) = \sum_{j\geq N} Cj(3/4)^j$.
For every $q\geq0$, the number $(N + 1)^q\sigma(N)$ does not exceed
the tail $\sum_{j\geq N} C(j + 1)^{q+1}(3/4)^j$
of a convergent series. Therefore,
\begin{equation}\label{eq:tail}
\lim\limits_{N \to \infty}(N + 1)^q\sigma(N) = 0.
\end{equation}

\begin{proof}[Proof of Theorem~\ref{thm:nil}]
We construct a quotient of~$R$ in which sufficiently high powers
of all elements are zero.
For $\Bbbk = \mathbb Z_p$, we additionally impose relations
of the form $u - 1 - z = 0$, where $u\in H$ approximates~$1 + z$.
We choose these relations in sufficiently high degrees so that
the quotient remains non-nilpotent by the Golod--Shafarevich inequality.

For $d\geq1$, put $V_d = \bigoplus_{j=1}^d R_j$
and $q_d = \dim V_d$.
For any $N\geq1$, the linear span of the powers $z^N$,
$z\in V_d$, has dimension at most $(N + 1)^{q_d}$.
Indeed, choose a basis $e_1,\ldots,e_{q_d}$ of~$V_d$ and consider
$P(\lambda_1,\ldots,\lambda_{q_d})
 = (\lambda_1e_1+\ldots+\lambda_{q_d}e_{q_d})^N$
as a polynomial in commuting variables~$\lambda_i$ with coefficients in~$R$.
Substituting $\lambda_i = a_i\in\Bbbk$, we obtain
$P(a_1,\ldots,a_{q_d}) = z^N$, where
$z = \sum_{i=1}^{q_d}a_ie_i$.
Thus, $\dim\Span\{z^N\mid z\in V_d\}\leq (N + 1)^{q_d}$.

By~\eqref{eq:tail}, choose an integer $N_d\geq2$ such that
$2(N_d + 1)^{q_d}\sigma(N_d)\leq2^{-d-1}$.
For $\Bbbk = \mathbb Z_p$, also require $N_d\geq p$.
In this case, for each $z\in V_d$, by Lemma~\ref{lem:lie},
choose $u_{d,z}\in H$ such that
$u_{d,z} - 1 - z \in R_{\geq N_d}$.
Define
$$
E_d = \begin{cases}
\Span\{z^{N_d}\mid z\in V_d\}, & \Bbbk\neq\mathbb Z_p,\\[4pt]
\Span\{z^{N_d},\ u_{d,z}-1-z\mid z\in V_d\},
 & \Bbbk=\mathbb Z_p.
\end{cases}
$$
In both cases, $E_d\subseteq R_{\geq N_d}$ and
$\dim E_d\leq 2(N_d + 1)^{q_d}$.
For every $d\geq1$, choose a basis~$\mathcal B_d$ of~$E_d$.
Every $e\in\mathcal B_d$ has a unique expression
$e = \sum_{j\geq N_d}\sum_{w\in\Fac_j}w\otimes f_{e,w}$,
where $f_{e,w}\in F_j$.
Put
$$
\Rel = \{f_{e,w}\mid d\geq1,\,e\in\mathcal B_d,\,
 j\geq N_d,\,w\in\Fac_j\}\setminus\{0\},
\quad I = \langle\Rel\rangle \triangleleft F.
$$
All elements of~$\Rel$ are homogeneous and have degree at least~2.
Hence the ideal~$I$ is homogeneous.
To apply Lemma~\ref{lem:GSineq}, estimate
$\sum_{f\in\Rel}(3/4)^{\deg f}$.
For fixed $d$ and~$j$, there are at most
$|\mathcal B_d|\cdot|\Fac_j|$ nonzero coefficients~$f_{e,w}$
of degree~$j$. Therefore,
$$
\sum_{f\in\Rel}(3/4)^{\deg f}
\leq
\sum_{d\geq1}|\mathcal B_d|
\sum_{j\geq N_d}|\Fac_j|(3/4)^j
\leq
\sum_{d\geq1}2(N_d+1)^{q_d}\sigma(N_d)
\leq
\sum_{d\geq1}2^{-d-1}=1/2.
$$

By the definition of~$I$, we have $\varphi_I(E_d) = 0$
for all $d\geq1$.
Put $A = \varphi_I(R)$ and $g_{c,k} = \varphi_I(h_{c,k})$.
By Lemma~\ref{lem:GSineq}, for every $n\geq1$ there exist
$k_1,\ldots,k_n\in\{1,2\}$ such that
$(t_{k_1}+I)\ldots(t_{k_n}+I)\neq0$.
Since $x_0\ldots x_{n-1}\in\Fac_n$, it follows that
$$
g_{x_0,k_1}\ldots g_{x_{n-1},k_n}
 = (x_0\ldots x_{n-1})\otimes
 ((t_{k_1}+I)\ldots(t_{k_n}+I))\neq0.
$$
Thus, the algebra~$A$ is not nilpotent.
For any $z\in V_d$, we have $z^{N_d}\in E_d$ and hence
$\varphi_I(z)^{N_d} = 0$.
Since $R = \bigcup_{d\geq1} V_d$, the algebra~$A$ is nil.
By Lemma~\ref{lem:lie}, the Lie algebra~$A^{(-)}$
is generated by the elements~$g_{c,k}$.

If $\Bbbk = \mathbb Z_p$, the identity
$(1 + h_{c,k})^r = 1 + rh_{c,k}$, $0\leq r<p$,
implies that~$H$ is generated by the elements $1 + h_{c,k}$.
The homomorphism $\pi\colon R^\#\to A^\#$,
$\pi(\lambda + r) = \lambda + \varphi_I(r)$,
transfers~$H$ to the subgroup generated by $1 + g_{c,k}$,
and~$u_{d,z}$ to $1 + \varphi_I(z)$, since
$\varphi_I(E_d) = 0$.
Every element of~$R$ belongs to some~$V_d$, so
$\pi(H) = 1 + A$.
By the isomorphism $A^\circ\cong1+A$, the group~$A^\circ$
is generated by the ten elements~$g_{c,k}$.
\end{proof}

\section{Finitely presented algebras}
\label{sec:amitsur}

The main idea is to obtain inverses of an infinite family of polynomials
using one invertible element.
To this end, we index the polynomials by finite words and use
finitely many generators of an algebra to pass between these words.

We construct the algebra from Theorem~\ref{thm:amitsur}.
Let~$\Bbbk$ be a field and let~$m\geq1$ be an integer.
Put $\Sigma = \{1,\ldots,m\}$, and denote by~$\Sigma^*$
the set of words over~$\Sigma$, including the empty word~$\varnothing$.
For $i\in\Sigma$, denote by~$i^r$ the word consisting of $r$ letters~$i$.

\begin{definition}\label{d:algebra}
Let $\Phi = ((a_1,b_1),\ldots,(a_m,b_m))$, where
$a_i,b_i\in\Bbbk[t]$.
Define the unital algebra~$B_\Phi$ over~$\Bbbk$ by generators
$S_1,\ldots,S_m,T_1,\ldots,T_m,\theta,D,R$
and relations
\begin{equation}\label{eq:def-rel}
T_iS_j = \delta_{ij},\
\theta = E\theta E + \sum_{i=1}^m S_i\theta T_i,\
D = E + \sum_{i=1}^m S_i(a_i(\theta)D+b_i(\theta))T_i,\
RD = DR = 1,
\end{equation}
where $1\leq i,j\leq m$ and $E = 1 - \sum_{i=1}^m S_iT_i$.
\end{definition}

For a given tuple~$\Phi$, define polynomials
$p_w\in\Bbbk[t]$, $w\in\Sigma^*$, recursively by
$p_\varnothing = 1$ and $p_{iw} = a_ip_w + b_i$.

We explain these generators and relations by means of a representation.
Consider
$L = \Bbbk[t]_{(t)}
 = \{g/s\in\Bbbk(t)\mid g,s\in\Bbbk[t],\,s(0)\neq0\}$.
Suppose that $p_w(0)\neq0$ for all $w\in\Sigma^*$.
On the free left $L$-module~$V$ with basis
$\{v_w\mid w\in\Sigma^*\}$, define the action of the generators
of~$B_\Phi$ by the following $L$-linear operators:
$$
S_iv_w = v_{iw},\quad
T_iv_{jw} = \delta_{ij}v_w,\quad
T_iv_\varnothing = 0,\quad
\theta v_w = tv_w,\quad
Dv_w = p_wv_w,\quad
Rv_w = p_w^{-1}v_w.
$$
We verify the defining relations.
The equalities $T_iS_j = \delta_{ij}$ hold on each basic vector.
The formulas for~$S_i,T_i$ imply
$Ev_\varnothing = v_\varnothing$ and $Ev_{jw} = 0$.
Thus,~$E$ acts as the projection onto~$Lv_\varnothing$.
Since $\theta = t\id_V$, we have
$E\theta E + \sum_{i=1}^m S_i\theta T_i
 = t\bigl(E + \sum_{i=1}^m S_iT_i\bigr) = \theta$.
The third relation in~\eqref{eq:def-rel} holds on~$v_\varnothing$
since $p_\varnothing = 1$.
The action of its right-hand side on~$v_{jw}$ gives
$$
S_j(a_j(\theta)D+b_j(\theta))v_w
 = (a_jp_w+b_j)v_{jw} = p_{jw}v_{jw} = Dv_{jw}.
$$
Finally, $p_w(0)\neq0$, so $p_w$ is invertible in~$L$,
and $RD = DR = 1$ on every basic vector.
Thus,~$V$ is a unital left $B_\Phi$-module,
and the action is $L$-linear.
Denote the corresponding algebra homomorphism by
$\rho\colon B_\Phi\to\End_L(V)$.

To prove Theorem~\ref{thm:amitsur}, take the following tuples
of pairs of polynomials:
\begin{equation}\label{eq:Phi-cases}
\Phi = \begin{cases}
((t,1)), & \charr\Bbbk = p > 0,\\
((1,1),(1,-t),(t,1)), & \charr\Bbbk = 0.
\end{cases}
\end{equation}
The corresponding polynomials~$p_w$ satisfy
$p_w(0)\neq0$ for all $w\in\Sigma^*$.
Indeed, if $\charr\Bbbk = p > 0$, then $p_w(0) = 1$
for all~$w$.
If $\charr\Bbbk = 0$, the formulas
$p_{1w} = p_w+1$, $p_{2w} = p_w-t$, and
$p_{3w} = tp_w+1$ show that $p_w(0)\geq1$
for every~$w\in \Sigma^*$.

\begin{proposition}\label{p:families}
Let~$\Bbbk$ be a field algebraic over its prime subfield,
and let~$\Phi$ be defined by~\eqref{eq:Phi-cases}.
For every irreducible polynomial~$q\in\Bbbk[t]$
with $q(0)\neq0$, there exists $w\in\Sigma^*$
such that $q$ divides~$p_w$.
\end{proposition}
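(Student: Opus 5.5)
The plan is to reduce the divisibility to a vanishing statement in a field. Put $K = \Bbbk[t]/(q)$, a field because $q$ is irreducible, and let $\alpha$ be the image of $t$. Since $q(0)\neq0$, we have $\alpha\neq0$. For each $w$, $q$ divides $p_w$ if and only if $p_w(\alpha)=0$ in $K$, so it suffices to find $w\in\Sigma^*$ with $p_w(\alpha)=0$. The recursion $p_\varnothing=1$, $p_{iw}=a_ip_w+b_i$ shows that $p_w(\alpha)$ is obtained from $1$ by applying the affine maps $f_i(x)=a_i(\alpha)x+b_i(\alpha)$ in the order given by reading $w$ from right to left. Hence the set $\{p_w(\alpha)\}$ is exactly the orbit of $1$ under the semigroup generated by the $f_i$, and we need $0$ to lie in this orbit. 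We also need finiteness: $K$ is generated over the prime field by $\alpha$ and the finitely many coefficients of $q$, all of which are algebraic over it. So $K$ is a finite field when $\charr\Bbbk=p$, and a number field when $\charr\Bbbk=0$.

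\emph{Case $\charr\Bbbk=p>0$.} Here $f_1(x)=\alpha x+1$, so $p_{1^r}(\alpha)=1+\alpha+\dots+\alpha^{r-1}$. Since $K$ is finite and $\alpha\neq0$, there is $n\geq1$ with $\alpha^n=1$. If $\alpha\neq1$, take $r=n$: then $p_{1^n}(\alpha)=(\alpha^n-1)/(\alpha-1)=0$. If $\alpha=1$, take $r=p$: then $p_{1^p}(\alpha)=p=0$.

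\emph{Case $\charr\Bbbk=0$.} The maps are $x\mapsto x+1$, $x\mapsto x-\alpha$ and $x\mapsto\alpha x+1$. Let $S$ be the orbit of $1$; it is closed under these three maps. The first step is to show that
$$\{P(\alpha)\mid P\in\mathbb Z[t],\ P(0)\geq1\}\subseteq S.$$
From $1$, the first two maps reach every $n-m\alpha$ with $n\geq1$ and $m\geq0$. From any $y\in S$, applying the third map, then the first map $n-1$ times and the second map $m$ times, yields $\alpha y+n-m\alpha$ with $n\geq1$ and $m\geq0$. We then build $P(\alpha)$ by Horner's scheme from the top coefficient down. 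At each step the new constant term is an arbitrary positive integer $n$. The coefficient that was just pushed up from the constant slot of $y$ becomes $y(0)-m$; it can be made any prescribed integer by choosing the earlier constant $y(0)$ large enough and $m\geq0$ suitably. A short induction on $\deg P$ makes this precise.

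The second step finishes the argument. Since $\alpha$ is algebraic over $\mathbb Q$ and nonzero, its minimal polynomial over $\mathbb Q$ has nonzero constant term. Clearing denominators and multiplying by $-1$ if necessary gives $P\in\mathbb Z[t]$ with $P(\alpha)=0$ and $P(0)\geq1$. Therefore $0\in S$, that is, $p_w(\alpha)=0$ for some $w$.

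I expect the main obstacle to be the bookkeeping in the Horner induction of the first step. There we must check that, although the constant terms produced are only positive integers, every integer coefficient in degrees $\geq1$ is attainable; the key point is that the subtraction of $\alpha$ is applied after multiplying by $\alpha$. Everything else is elementary.
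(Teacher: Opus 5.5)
Your strategy is the paper's. You reduce to showing that a nonzero $\alpha$, algebraic over the prime field, is a root of some $p_w$. In characteristic $p$ you use a geometric sum. In characteristic $0$ you run the induction $\gamma=t\delta+c$ with the word $1^{c-1}2^k3w$, which is exactly your Horner step. The characteristic-$0$ part is correct. It would be cleaner to phrase it, as the paper does, as the identity $\gamma=p_w$ in $\mathbb Z[t]$ for every $\gamma$ with $\gamma(0)>0$. As written, ``the constant slot of $y$'' has no meaning for an element $y\in S\subseteq K$ unless you carry the polynomial along.

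The characteristic-$p$ case fails as written because of an off-by-one error. Since $p_1=t+1$, the recursion gives $p_{1^r}(\alpha)=1+\alpha+\dots+\alpha^{r}$, not $1+\alpha+\dots+\alpha^{r-1}$. So for $\alpha\neq1$ with $\alpha^n=1$, your witness gives $p_{1^n}(\alpha)=(\alpha^{n+1}-1)/(\alpha-1)=1$. For $\alpha=1$ it gives $p_{1^p}(1)=p+1=1$. Neither vanishes. You must take $1^{n-1}$ and $1^{p-1}$ instead. Alternatively, as in the paper, take the single word $1^{ph-1}$, where $h$ is the multiplicative order of $\alpha$. Then $p_{1^{ph-1}}(\alpha)=p(1+\alpha+\dots+\alpha^{h-1})=0$, with no case distinction.

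There is also a smaller slip. $K=\Bbbk[t]/(q)$ contains $\Bbbk$, so it need not be finite or a number field. For example, if $\Bbbk=\overline{\mathbb F}_p$ then $q$ is linear and $K=\Bbbk$. What you actually use is that $\alpha$ is algebraic over the prime field, which holds by transitivity of algebraicity. Hence $\mathbb F_p(\alpha)$ is finite and $\alpha$ has finite order.
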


\begin{proof}
Let $q\in\Bbbk[t]$ be an irreducible polynomial with $q(0)\neq0$.
Every root~$\alpha$ of~$q$ is then algebraic over the prime subfield
of~$\Bbbk$.
If $p_w(\alpha) = 0$, then $q\mid p_w$.
Thus, it suffices to show that every nonzero element algebraic
over the prime subfield of~$\Bbbk$ is a root of some~$p_w$.

Let $\charr\Bbbk = p > 0$.
Every nonzero element~$\alpha$ algebraic over~$\mathbb Z_p$
has finite multiplicative order~$h$.
Hence,
$p_{1^{ph-1}}(\alpha)
 = 1 + \alpha + \ldots + \alpha^{ph-1}
 = p(1 + \alpha + \ldots + \alpha^{h-1}) = 0$.

Let $\charr\Bbbk = 0$.
First we prove that every polynomial $\gamma\in\mathbb Z[t]$
such that $\gamma(0)>0$ coincides with some~$p_w$.
We proceed by induction on $\deg\gamma$.
For a constant $c\geq1$, we have $c = p_{1^{c-1}}$.
Let $\deg\gamma\geq1$ and write
$\gamma = t\delta + c$, where
$c = \gamma(0)\in\mathbb Z_{>0}$ and $\delta\in\mathbb Z[t]$.
Choose $k\geq0$ such that $\delta(0)+k\geq1$.
By induction, there exists $w\in\Sigma^*$ such that
$\delta+k = p_w$. Hence,
$$
\gamma = t(p_w)+1-kt+(c-1) = p_{1^{c-1}2^k3w}.
$$
For every nonzero algebraic number~$\alpha$, there exists
$\nu\in\mathbb Z[t]$ such that
$\nu(\alpha) = 0$ and $\nu(0)>0$.
Therefore, $\nu = p_w$ for some~$w$, and $p_w(\alpha) = 0$.
\end{proof}

\begin{proof}[Proof of Theorem~\ref{thm:amitsur}]
Put $B = B_\Phi$.
For a word $w = i_1\ldots i_n\in\Sigma^*$, denote
$S_w = S_{i_1}\ldots S_{i_n}$,
$T_w = T_{i_n}\ldots T_{i_1}$, and
$S_\varnothing = T_\varnothing = 1$.
Then $T_wS_w = 1$.
The relations $T_iS_j = \delta_{ij}$ imply
$ES_i = T_iE = 0$ and $E^2 = E$.
The second relation in~\eqref{eq:def-rel} gives
$\theta S_i = S_i\theta$ and $T_i\theta = \theta T_i$.
Thus,~$\theta$ commutes with~$E$, $S_w$, and~$T_w$.
The third relation in~\eqref{eq:def-rel} implies
$DE = ED = E$,
$DS_i = S_i(a_i(\theta)D+b_i(\theta))$, and
$T_iD = (a_i(\theta)D+b_i(\theta))T_i$.
We prove by induction on the length of~$w$ that
\begin{equation}\label{eq:values}
DS_wE = S_wp_w(\theta)E,\quad ET_wD = p_w(\theta)ET_w.
\end{equation}
For $w = \varnothing$, these are the relations $DE = ED = E$.
Further, we obtain
$$
DS_{iw}E = S_i(a_i(\theta)D+b_i(\theta))S_wE
 = S_{iw}p_{iw}(\theta)E,
$$
and similarly
$ET_{iw}D = ET_w(a_i(\theta)D+b_i(\theta))T_i
 = p_{iw}(\theta)ET_{iw}$.
Put $c_w = ET_wRS_wE$.
Applying~\eqref{eq:values}, we obtain
$$
p_w(\theta)Ec_w = ET_wDRS_wE = E,\quad
c_wp_w(\theta)E = ET_wRDS_wE = E.
$$

Consider the subalgebra $EBE\subset B$, whose identity is~$E$.
By Proposition~\ref{p:families}, for every irreducible polynomial
$q\in\Bbbk[t]$ with nonzero constant term,
there exist $w\in\Sigma^*$ and $h\in\Bbbk[t]$
such that $p_w = qh$.
Put $a = q(\theta)E$ and $b = h(\theta)E$.
These elements commute, and their product
$ab = p_w(\theta)E$ has a two-sided inverse~$c_w$.
Hence $a(bc_w) = E$ and $(c_wb)a = E$.
In an associative ring, the left and right inverses of an element,
if they exist, coincide.
Thus, $bc_w = c_wb$, and $q(\theta)E$ is invertible in~$EBE$
with inverse $h(\theta)Ec_w$.
Every polynomial $s\in\Bbbk[t]$ such that $s(0)\neq0$
is a product of a nonzero constant and irreducible polynomials
with nonzero constant terms.
Therefore, $s(\theta)E$ is also invertible in~$EBE$.

The correspondence $g\to g(\theta)E$ defines a homomorphism
of unital algebras $\Bbbk[t]\to EBE$.
Since all polynomials with nonzero constant term are sent
to invertible elements, it extends to a homomorphism
$$
\varphi\colon L = \Bbbk[t]_{(t)}\to EBE,\quad
\varphi(g/s) = g(\theta)E(s(\theta)E)^{-1}.
$$
In particular, $\varphi(1) = E$ and $\varphi(t) = \theta E$.

In the representation~$\rho$, the element~$E$ acts as
the projection onto the free $L$-module
$EV = Lv_\varnothing$ of rank~1.
The module~$EV$ is invariant under every $a\in EBE$.
Since it is a free $L$-module of rank~1, each such~$a$
acts on~$EV$ as multiplication by an element of~$L$.
This defines a homomorphism $\varepsilon\colon EBE\to L$
such that $\varepsilon(\theta E) = t$.
Consequently, $\varepsilon(\varphi(g/s)) = g/s$,
so $\varepsilon\varphi = \id_L$.
In particular, $\varphi$ is injective.

Put $U = \sum_{w\in\Sigma^*}S_w\varphi(L)\subseteq BE$.
Note that $E = \varphi(1)\in U$.
The space~$U$ is invariant under left multiplication by~$S_i$
and~$T_i$. This follows from $T_iS_j = \delta_{ij}$ and $T_iE = 0$.
It is also invariant under left multiplication by~$\theta$ and~$D$.
Indeed, for any $h\in L$ and $w\in\Sigma^*$,
by~\eqref{eq:values} we have
$$
\theta S_w\varphi(h) = S_w\varphi(th),\quad
DS_w\varphi(h) = S_w\varphi(p_wh).
$$
Every polynomial~$p_w$ is invertible in~$L$ since $p_w(0)\neq0$.
Thus, $RD = 1$ yields
$RS_w\varphi(h)
 = RDS_w\varphi(p_w^{-1}h)
 = S_w\varphi(p_w^{-1}h)$.
Therefore, $U$ is invariant under left multiplication by all
generators of~$B$.
Since $E\in U$ and $U\subseteq BE$, we obtain $U = BE$.
The equality $ES_w = 0$ for all
$w\in\Sigma^*\setminus\{\varnothing\}$ implies
$EBE = EU = \varphi(L)$.
Together with the injectivity of~$\varphi$, this implies
$EBE\cong L$.

For the local ring~$L$, we have $J(L) = tL$.
Therefore,
$\theta E = \varphi(t)\in J(EBE) = EJ(B)E\subseteq J(B)$.
However, $(\theta E)^n = \varphi(t^n)\neq0$ for every $n\geq1$,
since $\varphi$ is injective and $t^n\neq0$ in~$L$.
\end{proof}

The presence of an identity in~$B$ is not essential
for the answer to Question~\ref{q:amitsur}.
Suppose that~$B$ is presented in the variety of unital
associative algebras by generators $x_1,\ldots,x_n$
and relations $f_j = 0$.
It can also be presented in the variety of associative algebras
by adjoining a generator~$u$ and relations
$u^2 = u$, $ux_i = x_iu = x_i$, and by replacing
the constant term $c\cdot1$ in every~$f_j$ with~$cu$.

\vspace{-0.1cm}

\noindent Vsevolod Gubarev \\
Sobolev Institute of Mathematics \\
Acad. Koptyug ave. 4, 630090 Novosibirsk, Russia \\
Novosibirsk State University \\
Pirogova str. 1, 630090 Novosibirsk, Russia \\
e-mail: wsewolod89@gmail.com

\end{document}